\def\titlerunning#1{\gdef\titrun{#1}}
\def\author#1{\gdef\autrun{\def\and{\unskip, }#1}\gdef\@author{#1}}
\def\address#1{{\def\and{\\\hspace*{18pt}}\renewcommand{\thefootnote}{}%
\footnote {#1}}%
\markboth{\autrun}{\titrun}}
\def\email#1{e-mail: #1}
\def\subjclass#1{{\renewcommand{\thefootnote}{}%
\footnote{\emph{Mathematics Subject Classification (2010):} #1}}}
\def\keywords#1{\par\medskip
\noindent\textbf{Keywords.} #1}
\newtheorem{thm}{Theorem}[section]
\newtheorem{cor}[thm]{Corollary}
\newtheorem{lem}[thm]{Lemma}
\newtheorem{proposition}[thm]{Proposition}
\theoremstyle{definition}
\newtheorem{defin}[thm]{Definition}
\newtheorem{rem}[thm]{Remark}
\newtheorem{exa}[thm]{Example}
\numberwithin{equation}{section}
\newcommand{\colim}{\mathop{\mathrm{colim}}}
\newcommand{\Symm}{\ensuremath{\mathrm{Symm}}}
\newcommand{\NSymm}{\ensuremath{\mathrm{NSymm}}}
\newcommand{\QSymm}{\ensuremath{\mathrm{QSymm}}}
\newcommand{\Id}{\ensuremath{\mathrm{Id}}}
\newcommand{\A}{\ensuremath{\mathcal{A}}}
\newcommand{\C}{\ensuremath{\mathbb{C}}}
\newcommand{\Q}{\ensuremath{\mathbb{Q}}}
\newcommand{\coefZ}{\ensuremath{\mathbb{Z}}}
\newcommand{\Z}{\ensuremath{\mathcal{Z}}}
\renewcommand{\d}{\ensuremath{\partial}}
\newcommand{\ul}[1]{\underline{#1}}
\newcommand{\lra}{\longrightarrow}
\newcommand{\dr}[3]{\ensuremath{#1\stackrel{#2}
{\longrightarrow}#3}}
\newcommand{\ddr}[5]{\ensuremath{#1\stackrel{#2}
{\longrightarrow}#3\stackrel{#4}{\longrightarrow}#5}}
\begin{document}


\baselineskip=17pt


\titlerunning{Hopf algebras and homotopy invariants}

\title{Hopf algebras and homotopy invariants}

\author{Victor Buchstaber 
\and 
Jelena Grbi\'{c}}

\date{}

\maketitle

\address{V. Buchstaber: Steklov Mathematical Institute of Russian Academy of Science, Moscow, Russia; \email{buchstab@mi.ras.ru}
\and
J. Grbi\' c: School of Mathematics, University of Southampton, Southamtpon, UK; \email{J.Grbic@soton.ac.uk}}

\subjclass{Primary 55T25, 16T05; Secondary 55P35, 55P40, 57T05}


\begin{abstract}
In this paper we explore new relations between Algebraic Topology and the theory of Hopf Algebras. For an arbitrary topological space $X$, the loop space homology $H_*(\Omega\Sigma X; \coefZ)$ is a Hopf algebra. We introduce a new homotopy invariant of a topological space $X$ taking for its value the isomorphism class (over the integers) of the Hopf algebra $H_*(\Omega\Sigma X; \coefZ)$. This invariant is trivial if and only if  the Hopf algebra $H_*(\Omega\Sigma X; \coefZ)$ is isomorphic to a Lie-Hopf algebra, that is, to a primitively generated Hopf algebra. We show that for a given $X$ these invariants are obstructions to the existence of a homotopy equivalence  $\Sigma X\simeq \Sigma^2Y$ for some space $Y$. 

Further on, using the notion of Hopf algebras, we establish new structural properties of the cohomology ring, in particular, of the cup product. For example, using the fact that the suspension of a polyhedral product $X$ is a double suspension, we obtain a strong condition on the cohomology ring structure of $X$. This gives an important application in toric topology. For an algebra to be realised as the cohomology ring of a moment-angle manifold $\Z_P$ associated to a simple polytope $P$, we found an obstruction in the Hopf algebra $H_*(\Omega\Sigma \Z_P)$.

In addition, we use homotopy decompositions to study particular Hopf algebras.

\keywords{Hopf algebra, Lie-Hopf algebra, loop suspension spaces, double suspension spaces, quasisymmetric functions}
\end{abstract}

\section{Introduction}

The notion of Hopf algebras came from the study of the homology of topological spaces with additional structures, that is, with multiplication. The expression Hopf algebra was coined by Armand Borel in 1953, honouring the fundamental work of Heinz Hopf ~\cite{Hopf} on $\Gamma$-manifolds, later known as Hopf manifolds, which are manifolds equipped with a product operation.  Soon after, Hopf algebras became a subject worthy of extensive study on its own. Hopf algebras have nowadays numerous applications in various area of mathematics and theoretical physics.  In this paper we explore new relations between Algebraic Topology and the theory of Hopf Algebras.

A pointed topological space $X$ is an $H$-space if there is a continuous map  $\mu\colon X\times X\lra X$ called multiplication such that the base point acts as a left and right unit.
 $H$-spaces appear in mathematics all the time. The most classical examples are topological groups: spaces $X$ with a group structure such that both the multiplication map $\mu\colon X\times X\lra X$ and the inversion map $X\lra X$, $x\mapsto x^{-1}$, are continuous.
Other examples are based loop spaces, Eilenberg-MacLane spaces, finite $H$-spaces such as Lie groups,  and the 7 sphere, the homotopy fibre of an $H$-map and so on.

A significant breakthrough in the study of $H$-spaces was achieved by Hopf and Borel (see~\cite{Hopf, Borel}) who classified graded Hopf algebras over a field of characteristic 0 which can be realised as the cohomology rings of $H$-spaces, and consequently gave a necessary condition on a graded Hopf algebra to be the cohomology algebra of an $H$-space. They showed that the existence of the comultiplication with count in a graded Hopf algebra restricts the multiplicative structure considerably. Using this Hopf algebra classification, it is easy to see that, for example, $\C P^n$ is not an $H$-space. The Hopf and Borel theorems give necessary but not sufficient conditions for a space to be an $H$-space; for example,  although the cohomology of $BU(2)$ satisfies the conditions of the Hopf and Borel theorems, $BU(2)$ is not an $H$-space. It is worth noting that  Hopf and Borel studied possible multiplications of appropriate Hopf algebras and their results are based on algebra isomorphisms of Hopf algebras while not studying further their coalgebra structures.

Ever since Hopf and Borel's work, there has been an extensive development of the theory of graded connected Hopf algebras (see~\cite{Milnor-Moore}) using algebraic and topological tools.
Let $A$ be a Hopf algebra. Denote by $P(A)$ the primitives in $A$, by $I(A)$ the augmentation ideal of $A$, and by $Q(A)=I(A)/I(A)^2$ the indecomposables. Then $A$ is primitively generated if the natural map $P(A)\lra I(A)\lra Q(A)$ is onto.
Such a primitively generated Hopf algebra $A$ is called \emph{Lie-Hopf} algebra.

Let $R$ be a principal ideal domain and assume we work with a topological space $X$ such that $H^*(X\times X; R)\cong H^*(X;R)\otimes H^*(X;R)$. The diagonal map on $X$ induces the cup product in $H^*(X;R)$  and dually it is closely related to the comultiplication in the Hopf algebra $H_*(\Omega\Sigma X;R)$. We show that the cup product presents in certain ways obstructions to $H_*(\Omega\Sigma X;R)$ be a Lie-Hopf algebra. Our starting point is the Bott and Samelson theorem \cite{BS}  which in an analogous way to the Hopf and Borel classification of Hopf algebras gives necessary conditions for graded Hopf algebras to be realised as the homology algebras of loop-suspension spaces. Namely, they prove that the homology $H_*(\Omega\Sigma X;R)$ is the tensor algebra on the reduced homology of $X$. 

One of the classical problems in homotopy theory is the Milnor double suspension problem, posed in 1961,  in which he asked whether the double suspension of a homology sphere is homeomorphic to the standard sphere. This problem can be generalised in homotopy theory to the problem of describing all spaces which after being suspended twice become a triple suspension space.
Utilising the theory of graded connected Hopf algebras, in particular by studying the properties of  comultiplications, we look for a necessary condition for a topological space $X$ to have the property that $\Sigma X\simeq\Sigma^2 Y$ for some $Y$.   Our approach is similar in nature to the one used in the study of $H$-spaces in the sense that we explore certain properties of naturally arising Hopf algebras. The main result is the following theorem.

\begin{thm}
Let $X$ be a space such that $\Sigma X\simeq\Sigma^2 Y$ for some $Y$ and assume that $H_*(X;\coefZ)$ is torsion free. Then the Hopf algebra $H_*(\Omega\Sigma X; \coefZ)$ is isomorphic to a Lie-Hopf algebra.
\end{thm}

As will be discussed in Sections~\ref{examples} and \ref{polyhedral},  spaces with the property that $\Sigma X\simeq\Sigma^2 Y$ are not rare; they appear, for example, in the context of polyhedral product functors - which are functorial generalisations of moment-angle complexes $\Z_K$, complements of hyperplane arrangements, some simply connected 4-manifolds and so on. Note that the class of manifolds with this property seems to be quite narrow and hard to detect. Thus the result about moment-angle manifolds, that is $\Z_K$ when $K$ is the dual of the boundary of a simple polytope is of great importance.

\begin{cor}
Let $\Z_K$ be a moment-angle complex such that $H_*(\Z_K;\coefZ)$ is torsion free. Then the comultiplication in $H_*(\Z_K;\coefZ)$, dual to the ring structure of $H^*(\Z_K;\coefZ)$, defines a Hopf algebra structure on the tensor algebra $T(\widetilde{H}_*(\Z_K; \coefZ))$ which over $\coefZ$ is isomorphic to a Lie-Hopf algebra.
\end{cor}

The last result motivates us to establish new structural properties of the cohomology ring, in particular, of the cup product. Consider a topological space $X$ such that $H^*(X;\coefZ)$ is torsion free. Although, in general, a topological space $X$ might have a non-trivial cup product, once suspended, all the cup products are trivialised. However, looking at the Hopf algebra $H_*(\Omega\Sigma X;\coefZ)$, in particular at its coalgebra structure, we can recover the information about the cup product in the cohomology of $X$. This way we associate to the cup product in $H^*(X;\coefZ)$ a structural property that can be read off the Hopf algebra $H_*(\Omega\Sigma X;\coefZ)$. For example, if a topological space $X$ has the property that  $\Sigma X\simeq\Sigma^2 Y$ for some $Y$, then the Hopf algebra $H_*(\Omega\Sigma X; R)$ is isomorphic to a Lie-Hopf algebra meaning that there is a change of basis over $R$ in $T(\widetilde{H}_*(X;R))$ such that the reduced diagonal $\widetilde{\Delta}\colon H_*(X;R)\lra H_*(X;R)\otimes H_*( X;R)$  becomes trivial.  The dual statement gives a structural property of the cup product in $H^*(X;R)$.

The strength and beauty of this approach lies in the strong connection between algebra and topology.  In Section~\ref{applications} we detect some properties of the algebra of quasi-symmetric functions by identifying it with $H_*(\Omega\Sigma \C P^\infty ; R)$ and by applying homotopy theoretic decomposition methods.

\section{$S^nS$-spaces}

We start by introducing the notion of  $S^nS$-spaces.
\begin{defin}
For a given $n\in\mathbb N\cup\{0\}$, a CW complex $X$ is said to be an \emph{$S^nS$-space} if there is a CW complex $Y$ such that
$\Sigma^n X\simeq \Sigma^{n+1} Y$.
\end{defin}

It is seen readily from the definition that if $X$ is an $S^nS$-space, then it is also an $S^{n+1}S$-space. Thus a trivial example of $S^nS$-spaces is given by a suspension space $X$, that is,  $X\simeq \Sigma Y$ for some topological space $Y$.

Milnor considered a homology $3$-sphere $M^3$ with $\pi_1(M) \neq 0$ and asked whether the double suspension of $M^3$ is homeomorphic to $S^5$. This was partially proved in 1975 by
Edwards~\cite{Ed}, and in a sharper form in 1979 by
Cannon~\cite{Ca}. It provided the first example of a triangulated manifold which is not locally PL-homeomorphic to Euclidean space.
The solution of the Milnor double suspension problem provides us with a non-trivial example of an $S^2S$-space. In general, Cannon~\cite{Ca} proved that the double suspension $\Sigma^2 S^n_H$ of any homology $n$-sphere $S^n_H$ is homeomorphic to the topological sphere $S^{n+2}$,  therefore showing that a homology sphere is an $S^2S$-space.

Our study of $S^nS$-spaces begins by detecting operations under which the family of $S^nS$-spaces is closed.

If we start with a topological pair  $(X_1, X_2)$ of $S^nS$-spaces, a natural question to ask is whether the quotient space $X_1/X_2$ is an $S^nS$-space. The pair $(S^k, S^{k-1})$ where $S^{k-1}$ is imbedded as the equator in $S^k$ gives one of the simplest examples where the quotient $S^k/S^{k-1}\simeq S^k\vee S^k$ is an $S^0S$-space for $k\geq 1$.

 \begin{lem}
 Let  $(X_1, X_2)$ be a topological pair of $S^nS$-spaces. Then the quotient space $X_1/X_2$ is not in general an $S^nS$-space.
 \end{lem}
 \begin{proof} We prove the lemma by constructing an example. Let $E\lra S^2$ be a disc $D^2$ bundle associated to the Hopf bundle over $S^2$. Then $\d E=S^3$ and $E/\d E=\mathbb{C}P^2$. In this way we have constructed a topological pair $(E,\d E)$ of $S^1S$-spaces such that $E/\d E$ is not an $S^1S$-space.
\end{proof}

There are several operations under which the property of being an $S^nS$-space is preserved.
\begin{lem}
\label{wedge}
Let $X_1,\ldots,X_k$ be $S^nS$-spaces. Then the following spaces are $S^nS$-spaces:
\begin{enumerate}
\item $X_1\vee\ldots\vee X_k,$
\item $X_1\wedge\ldots\wedge X_k$,
\item $X_1\times\ldots\times X_k$.
\end{enumerate}
\end{lem}
\begin{proof}
Let us assume that $\Sigma^n X_i\simeq \Sigma ^{n+1}Y_i$ for some $Y_i$ and $i=1,\ldots, k$.
\begin{enumerate}
\item
The following homotopy equivalences
\[
\Sigma^n(X_1\vee\ldots\vee X_k)\simeq \Sigma^nX_1\vee\ldots\vee \Sigma^nX_k\simeq\Sigma^{n+1}(Y_1\vee\ldots\vee Y_k)
\]
give statement $(1)$.
\item  Statement $(2)$ is proved by the following homotopy equivalences
\[
\Sigma^n(X_1\wedge\ldots\wedge X_k)\simeq \Sigma^{n+1}Y_1\wedge X_2\wedge\ldots\wedge X_k\simeq\Sigma Y_1\wedge\Sigma^{n+1}Y_2\wedge\ldots\wedge X_k\simeq \Sigma^{n+k}Y_1\wedge\ldots\wedge Y_k.
\]
\item  Notice that $\Sigma (X_1\times X_2)\simeq \Sigma X_1\vee \Sigma X_2\vee \Sigma (X_1\wedge X_2)$. Now by statements $(1)$ and $(2)$, $\Sigma^n (X_1\times X_2)\simeq\Sigma^n X_1\vee\Sigma^n X_2\vee\Sigma^n (X_1\wedge X_2)\simeq\Sigma^{n+1}Y_1\vee\Sigma^{n+1}Y_2\vee\Sigma^{n+2}(Y_1\wedge Y_2) $. The proof of statement $(3)$ now follows by induction on $k$.
\end{enumerate}
\end{proof}

A generalisation of Lemma~\ref{wedge} can be given in terms of the Whitehead filtration of
$X_1\times\ldots\times X_k$.
For $k$ pointed topological spaces $(X_i, *)$, the \emph{Whitehead filtration} $T^k_{k-l}$ is defined in the following way
\[
T^k_{k-l}=\{(x_1,\ldots. x_k) \ |\ \text{ where $x_i=*$ for at least $l$ coordinates}\}.
\]
Notice that $T^k_k=X_1\times\ldots\times X_k$, $T^k_{k-1}$ is known as the fat wedge, $T^k_{1}=X_1\vee\ldots\vee X_k$, and $T^k_0=*$.
\begin{lem}
\label{whitehead}
Let $X_1,\ldots,X_k$ be $S^nS$-spaces. Then $T^k_l$ is $S^nS$-space for ${0\leq l\leq k}$.
\end{lem}
\begin{proof}
We prove the lemma by induction on $l$. The statement for  $l=1$ is true by Lemma~\ref{wedge}(1). Let us assume that $T^k_l$ is an $S^nS$-space. Notice that $T^k_l$ includes in $T^k_{l+1}$ for $0\leq l\leq k-1$ and that there is a cofibration sequence
\[
T^k_l\lra T^k_{l+1}\lra \bigvee_{1\leq i_1<\ldots<i_{l+1} \leq k}X_{i_1}\wedge X_{i_2}\wedge\ldots\wedge X_{i_{l+1}}
\]
which after being suspended becomes trivial, that is,
\[
\Sigma T^k_{l+1}\simeq \Sigma T^k_l\bigvee \left( \bigvee_{1\leq i_1<\ldots<i_{l+1} \leq k}\Sigma X_{i_1}\wedge X_{i_2}\wedge\ldots\wedge X_{i_{l+1}}\right).
\]
Now by the induction hypothesis and Lemma~\ref{wedge}(2), we get that $T^k_{l+1}$ is an $S^nS$-space, which proves the lemma.
\end{proof}
\begin{lem}
Let $X$ be an $S^nS$-space. Then
\begin{enumerate}
\item the space $\Omega\Sigma^{n} X$ is  an $S^1S$-space,
\item the space $\Omega\Sigma X$ is an $S^nS$-space.
\end{enumerate}
\end{lem}
\begin{proof}
Both statements are direct corollaries of  the James splitting~\cite{Ja},  $\Sigma\Omega\Sigma Z\simeq \bigvee_{k=1}^{\infty} \Sigma Z^{(k)}$, where $Z^{(k)}$ denotes the $k$-fold smash power of $Z$.
\end{proof}

The above elementary operations over  $S^nS$-spaces already indicate that the class of $S^nS$-spaces is wide.

In the remainder of the paper, of special interest to us will be the case $n=1$. We show that the class of $S^1S$-spaces is unexpectedly large by describing non-trivial homotopy theoretic constructions which produce $S^1S$-spaces.

\section{Applications of  the theory of Hopf algebras to $S^1S$-spaces}
We start by recalling some fundamental definition of the theory of Hopf algebras.
Let $R$ be a commutative ring.
\begin{defin}
An $R$-algebra $(A, m_A, \eta_A)$ is a \emph{Hopf algebra} if it has an additional structure given by $R$-algebra homomorphisms: $\Delta_A\colon A\lra A\otimes_RA$ called comultiplication; $\epsilon_A\colon A\lra R$ called counit, and an $R$-module homomorphism $S_A\colon A\lra A$ called antipode that satisfy the following properties
\begin{enumerate}
\item coassociativity:
\[
(\Id_A\otimes\Delta_A)\Delta_A=(\Delta_A\otimes\Id_A)\Delta\colon A\lra A\otimes A\otimes A;
\]
\item counitarity:
\[
m_A(\Id_A\otimes\epsilon_A)\Delta_A=\Id_A=m_A(\epsilon_A\otimes \Id_A)\Delta_A;
\]
\item antipode property:
\[
m_A(\Id_A\otimes S_A)\Delta_A=\eta_A\epsilon_A=m_A(S_A\otimes \Id_A)\Delta
\]
where $m_A\colon A\otimes A\lra A$ is the multiplication in $A$ and $\eta_A$ is the unit map.
\end{enumerate}
\end{defin}
\begin{defin}
Two Hopf algebras  $(A, m_A, \Delta_A, \epsilon_A, \eta_A, S_A)$ and $(B, m_B, \Delta_B, \epsilon_B, \eta_B, S_B)$ over $R$ are \emph{isomorphic}  if there is an algebra isomorphism $f\colon (A, m_A, \eta_A)\lra (B, m_B,\eta_B)$ satisfying
\begin{enumerate}
\item[i)] $(f\otimes f)\circ\Delta_A=\Delta_B\circ f$,
\item[ii)] $f\circ S_A=S_B\circ f$,
\item[iii)] $\epsilon_A=\epsilon_B\circ f$.
\end{enumerate}
\end{defin}
Note that $f$ uniquely determines the Hopf algebra structure of $B$ if the Hopf algebra structure of $A$ is given.

\begin{defin}
A Hopf algebra $A$ is called a \emph{Lie-Hopf} algebra if the set of multiplicative generators $\{ a_i\}_{i\in\mathbb{N}}$ comprises primitives, that is,
\[
\Delta a_i=a_i\otimes 1+1\otimes a_i\ \text{ for every $i$}.
\]
\end{defin}
Let $R$ be a PID and let $X$ be a connected space such that $H_*(X; R)$ is torsion free. Then
the  Bott-Samelson theorem~\cite{BS} asserts that the homology $H_*(\Omega\Sigma X; R)$ is isomorphic as an algebra to the tensor algebra $T(\widetilde{H}_*(X; R))$ and the adjoint of the identity on $\Sigma X$, the suspension map $E\colon X\lra\Omega \Sigma X$ induces the canonical inclusion of $\widetilde{H}_*(X;R)$ into $T(\widetilde{H}_*(X; R))$. This tensor algebra can be given many different Hopf algebra structures by considering different coalgebra structures on the module $\widetilde{H}_*(X; R)$ not necessarily coming form the topology of the space $X$. In the following proposition we identify  that Hopf algebra structure that is naturally coming from the topology of the space $\Omega\Sigma X$. 

\begin{proposition}
\label{generatingcomultiplication}
Let  $T(\widetilde{H}_*(X;R))$ be the Hopf algebra where the coalgebra structure is generated by the comultiplication  on  $\widetilde{H}_*(X;R)$ induced by the diagonal $\Delta_X\colon X\lra X\times X$. This Hopf algebra structure  coincides with the Hopf algebra structure of $H_*(\Omega\Sigma X;R)$ where the comultiplication is induced by the diagonal map $\Delta_{\Omega\Sigma X}\colon\Omega\Sigma X\lra\Omega\Sigma X\times\Omega\Sigma X$.
\end{proposition}
\begin{proof}
For any $H$-space $H$ with multiplication $\mu_H$, we have the following commutative diagram
\[
\xymatrix{
H\times H \ar[rrr]^{\mu_H} \ar[d]^{\Delta_{H}\times \Delta_{H}} && &H\ar[d]^{\Delta_{H}}\\
H\times H\times H\times H \ar[rr]^{\Id\times \mathrm{T}\times\Id} && H\times H\times H\times H\ar[r]^-{\mu_{H}\times\mu_{H}} &H\times H}
\]
where T is the twist map. Since the composite $(\mu_H\times\mu_H)\circ (\Id\times \mathrm{T}\times\Id)$ is the natural multiplication on $H\times H$ induced by $\mu_H$, we have that the diagonal $\Delta_H$ is  a multiplicative map.

To prove the proposition notice further that $\Omega\Sigma X$ is a universal space in the category of homotopy associative $H$-spaces (see~\cite{Ja}). The universal property states that any $H$-map from $\Omega\Sigma X$ to a homotopy associative space is determined by its restriction to $X$.
Now since the diagonal $\Delta_{\Omega\Sigma X}\colon \Omega\Sigma X\lra\Omega\Sigma X\times\Omega\Sigma X$ is an $H$-map, it is determined by its restriction to $X$ which is the composite $\ddr{X}{\Delta_X}{X\times X}{E\times E}{ \Omega\Sigma X\times\Omega\Sigma X}$. This proves the proposition.
\end{proof}

\begin{cor}
\label{coHcorollary}
Let $C$ be a co-$H$-space. Then the Hopf algebra $H_*(\Omega\Sigma C;R)$ is a Lie-Hopf algebra, that is, it is primitively generated.
\end{cor}
\begin{proof}
By Proposition~\ref{generatingcomultiplication}, the comultiplication in  $H_*(\Omega\Sigma C;R)$ is generated by the comultiplication on $\widetilde{H}_*(C;R)$. Now directly from the definition of a co-H-space, we have that the reduced diagonal $\widetilde{\Delta}\colon C\lra C\wedge C$ is trivial, showing that $\widetilde{H}_*(C;R)$ consists only of primitive elements.
\end{proof}

We generalise the last statement to spaces which are not necessarily co-$H$-spaces but once suspended become a suspension of a co-$H$-space.

\begin{thm}
\label{coH}
Let $X$ be a topological space such that $\Sigma X\simeq \Sigma C$ where $C$ is a co-$H$-space and assume that $H_*(X;\mathbb{Z})$ is torsion free. Then over $\mathbb{Z}$ the Hopf algebra $H_*(\Omega\Sigma X;\coefZ)$  is isomorphic to a Lie-Hopf algebra, that is, to a primitively generated Hopf algebra.
\end{thm}
\begin{proof}
Let $\varphi\colon \Sigma X\lra\Sigma C$ be a homotopy equivalence. Since  $\Omega \varphi$ is an $H$-map it induces a Hopf algebra morphism $\Omega\varphi_*\colon H_*(\Omega\Sigma X)\lra H_*(\Omega\Sigma C)$ which is an isomorphism of Hopf algebras as $\Omega \varphi$ is a homotopy equivalence. Now the statement of the theorem follows from Corollary~\ref{coHcorollary} since $H_*(\Omega\Sigma C)$ is a Lie-Hopf algebra.
\end{proof}
Throughout the rest of this paper we assume the coefficient ring $R=\coefZ$ and thus, for the sake of convenience, in integral homology and cohomology we suppress the coefficients from the notation.

Let $\varphi \colon \Sigma X \to \Sigma^2 Y$ be a homotopy equivalence. Then by the Bott-Samelson theorem, we have
\[
H_*(\Omega\Sigma X)\cong T(\widetilde H_*(X)), \qquad
H_*(\Omega\Sigma^2 Y)\cong T(\widetilde H_*(\Sigma Y)).
\]
Let  $\{a_i\}$ be an additive basis for $\widetilde H_*(X)$ and let $\{ b_i\}$ be an additive basis for $\widetilde H_*(\Sigma Y)$. Then the elements $b_i$ are primitive, that is,
$\Delta b_i = 1\otimes b_i+b_i\otimes 1$. If we know the cohomology  ring $H^*(X)$, we can calculate the comultiplication $\Delta_X \colon H_*(X) \to H_*(X)\otimes H_*(X)$.

\begin{lem}
The comultiplication $\Delta_X \colon H_*(X) \to H_*(X)\otimes H_*(X)$
is determined by the Hopf algebra homomorphism
\[ \varphi_* \colon H_*(\Omega\Sigma X) \longrightarrow H_*(\Omega\Sigma^2 Y). \]
\end{lem}
\begin{proof}
From the definition of a Hopf algebra isomorphism, the comultiplication $\Delta_X$ is determined by the formula
$\Delta_{\Omega\Sigma^2
Y}\varphi_*(a) = \varphi_*\otimes\varphi_*(\Delta_{\Omega\Sigma
X}a)$ where $a\in H_*(X)$.
\end{proof}
\begin{proposition}
The Hopf algebra $H_*(\Omega\Sigma X)$ for $X=\Omega \Sigma Z$ with $Z$ a co-$H$-space is isomorphic to a Lie-Hopf algebra. The change of homology generators is given by Hopf invariants.
\end{proposition}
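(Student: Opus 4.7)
The plan is to reduce the first assertion to Theorem~\ref{coH} by exhibiting a co-$H$-space $C$ with $\Sigma X \simeq \Sigma C$, and then to recognise the resulting change of basis as being given by the James-Hopf invariants.

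First, note that when $Z$ is a co-$H$-space, every smash power $Z^{(k)}$ is again a co-$H$-space: the comultiplication on $Z$ induces one on $Z \wedge W$ for any pointed $W$ via the composite $Z \wedge W \lra (Z \vee Z) \wedge W \simeq (Z \wedge W) \vee (Z \wedge W)$, and the argument iterates in $k$. A wedge of co-$H$-spaces is also a co-$H$-space, so $C := \bigvee_{k \geq 1} Z^{(k)}$ qualifies. The James splitting then gives
\[
\Sigma X \;=\; \Sigma\Omega\Sigma Z \;\simeq\; \bigvee_{k \geq 1} \Sigma Z^{(k)} \;\simeq\; \Sigma C .
\]
By the Bott-Samelson theorem $H_*(X) \cong T(\widetilde H_*(Z))$ is torsion free under the standing assumption, as in Theorem~\ref{coH}, that $H_*(Z)$ is torsion free. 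Thus Theorem~\ref{coH} applies and yields a Hopf algebra isomorphism between $H_*(\Omega\Sigma X)$ and a Lie-Hopf algebra.

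For the second assertion, trace through the proof of Theorem~\ref{coH}: the isomorphism is $\Omega\varphi_*$ for $\varphi \colon \Sigma X \to \Sigma C$ the James equivalence. Classically, $\varphi$ is adjoint to the $H$-map assembled out of the James-Hopf invariants $h_k \colon \Omega\Sigma Z \to \Omega\Sigma Z^{(k)}$. Taking $\{z_i\}$ as a homology basis of $\widetilde H_*(Z)$, the Bott-Samelson generators of $\widetilde H_*(X) = T(\widetilde H_*(Z))$ are the monomials $z_{i_1}\cdots z_{i_k}$. The map $h_{k\,*}$ sends such a length-$k$ monomial to the corresponding Künneth generator of $\widetilde H_*(Z^{(k)}) \subset \widetilde H_*(C)$, together with lower-order correction terms produced by the combinatorics of the James model. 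These images form a set of multiplicative generators of $H_*(\Omega\Sigma X)$ that become primitive after transport through the Lie-Hopf isomorphism, and the unitriangular matrix relating them to the original tensor-algebra generators is precisely the matrix of Hopf invariants.

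The main technical point to watch is to pin down, over $\coefZ$ and not merely rationally, the change-of-basis matrix produced by the Hopf invariants. Once one has the classical combinatorial formulas for $h_{k*}$ on the tensor algebra, this is bookkeeping; but ensuring integrality of the identification, as opposed to a rational one, requires a careful appeal to the filtration of the James construction rather than to any splitting that only exists after inverting primes.
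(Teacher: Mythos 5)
Your argument is correct and follows essentially the same route as the paper: the James splitting $\Sigma\Omega\Sigma Z\simeq\Sigma\bigl(\bigvee_{k\geq 1}Z^{(k)}\bigr)$ reduces the statement to Theorem~\ref{coH} applied to the co-$H$-space $\bigvee_{k\geq 1}Z^{(k)}$, with the change of basis coming from the James--Hopf invariants. You merely fill in details the paper leaves implicit (that smash powers and wedges of co-$H$-spaces are co-$H$-spaces, and the torsion-freeness hypothesis needed to invoke Bott--Samelson), which is a useful clarification rather than a divergence.
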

\begin{proof} The proof follows from the James splitting of $ X$, that is, $\Sigma X\simeq\Sigma\Omega\Sigma Z\simeq\Sigma(\bigvee_{k=1}^\infty Z^{(k)})$. Since $Z$ is a co-H-space, $H_*(\Omega\Sigma(\bigvee_{k=1}^\infty Z^{(k)}))$ is primitively generated. Since the James splitting is given by Hopf invariants, the statement of the proposition follows.
\end{proof}

Hopf and Borel used the theory of  Hopf algebras to find a necessary condition on homology of $X$ so that $X$ is an $H$-space. Using the same theory but studying in detail the coalgebra structure, we give a necessary condition on the homology of $X$ to be an $S^1S$-space.
\begin{cor}
\label{s1s}
Let $X$ be a CW-complex such that $H_*(X)$ is torsion free and let the diagonal  $\Delta \colon
H_*(X) \to H_*(X)\otimes H_*(X)$  define a Hopf algebra structure on $H_*(\Omega\Sigma X)\cong T(\widetilde H_*(X))$. If the Hopf algebra $H_*(\Omega\Sigma X)$ is not isomorphic to a Lie-Hopf algebra, then $X$ is not an $S^1S$-space.\qed
\end{cor}

\section{Examples of $S^1S$-spaces}
\label{examples}

In this section we present several examples of $S^1S$-spaces. In doing so we also illustrate how our invariant can be explicitly calculated and related to some classical homotopy invariants.
\subsection{``Small'' CW-complexes}
\label{small}
The ``smallest'' topological spaces amongst which we can find non-trivial $S^1S$-spaces are two cell complexes. Let $\alpha\colon S^k\lra S^l$ where $k>l$ such that $\Sigma\alpha\simeq *$. Taking its homotopy cofibre, we obtain a space $X=S^l\cup_\alpha e^{k+1}$ which is an $S^1S$-space. Slightly more generally start with a wedge of spheres and attach to them a single cell by an  attaching map $\alpha$ such that $\Sigma\alpha\simeq *$. In this case we also obtain an $S^1S$-space. As an explicit example, take  the Whitehead product $\dr{S^{m+n-1}}{\omega}{S^m\vee S^n} $ which is the attaching map for $S^m\times S^n$. As $\Sigma\omega\simeq *$, there is a stable splitting $\Sigma(S^m\times S^n)\simeq S^{m+1}\vee S^{n+1}\vee S^{m+n+1}$, proving that $S^m\times S^n$ is an $S^1S$-space.

\subsection{Simply-connected $4$-manifolds}

Note that the classical Pontryagin-Whitehead theorem (see for example~\cite{Nov}) asserts that closed simply-connected 4-dimensional manifolds with isomorphic integer cohomology rings are homotopy equivalent.
This result was strengthen by Freedman who proved that smooth, closed simply-connected 4-manifolds with isomorphic integer cohomology rings are homeomorphic.

Using our invariant we can reformulate Pontryagin-Whitehead and Freedman theorem in the following way.

\begin{proposition}
Let $M_1$ and $M_2$ be closed simply-connected 4-manifolds. If $\Sigma M_1\simeq \Sigma M_2$, then the manifolds $M_1$ and $M_2$ are homotopy equivalent.
\end{proposition}
\begin{proof}
The statement will follow readily from Proposition~\ref{mnflds}.
\end{proof}
\begin{cor}
Let $M_1$ and $M_2$ be smooth closed simply-connected 4-manifolds. If $\Sigma M_1\simeq \Sigma M_2$, then $M_1$ and $M_2$ are homeomorphic. \qed
\end{cor}

Recall that the Hopf invariant is a classical obstruction to $\Sigma \mathbb{C}P^2$ being a double suspension. Here, as a mere illustration of the calculation of our invariant, we show that that $\C P^2$ is not an $S^1S$-space. To prove that,  according to Corollary~\ref{s1s} we need to show that the Hopf algebra $H_*(\Omega\Sigma\C P^2)$ is not isomorphic to a Lie-Hopf algebra. We first calculate the coproduct structure of  $H_*(\C P^2)$.  Let
$u_1\in H_2(\C P^2)$ and $u_2\in H_4(\C P^2)$ be generators. Then the coproduct in $H_*(\C P^2)$ is determined by
$\Delta u_1=1\otimes u_1+u_1\otimes 1$ and $\Delta u_2=1\otimes u_2+u_1\otimes u_1 +u_2\otimes 1$.
By Proposition~\ref{generatingcomultiplication}, the coproduct in the Hopf algebra $H_*(\Omega\Sigma\mathbb{C}P^2)\cong T(u_1,u_2)$ is determined by the coproduct in $H_*(\C P^2)$. Thus we have
\[
\Delta u_1=1\otimes u_1+u_1\otimes 1,\quad \Delta u_2=1\otimes u_2+
u_1\otimes u_1 +u_2\otimes 1.
\]
We use the bar notation to denote the tensor product in $T(u_1, u_2)$.
The group $H_4(\Omega\Sigma\C P^2)=\mathbb{Z}\oplus\mathbb{Z}$ is generated by  $u_2$ and $u_1|u_1$. The only possible change of basis is given by $w_1=u_1,\; w_2=u_2+\lambda u_1|u_1$ for some integer $\lambda$. From the fact that $w_2$ is a primitive element, we have the following relation
\[
\Delta w_2=1\otimes w_2+w_2\otimes 1 = 1\otimes u_2+u_1\otimes u_1 +u_2\otimes 1 + \lambda(1\otimes u_1+u_1\otimes 1)|(1\otimes u_1+u_1\otimes 1).
\]
Thus we obtain the following condition on $\lambda$
\begin{equation}
\label{lambda}
1+2\lambda=0.
 \end{equation}
This equation has no solution over the integers, proving that $\C P^2$ is not an $S^1S$-space as there is no Hopf algebra isomorphism between $H_*(\Omega\Sigma\C P^2)$ and a Lie-Hopf algebra. In this way we have also reproved the classical well known result that the suspension of the Hopf map $S^3\lra S^2$ is not null homotopic.

\begin{rem}
In an analogous way we can prove that no Hopf invariant one complexes are $S^1S$-spaces and as a consequence we see that the suspension of the Hopf map is not null homotopic.
\end{rem}

If instead of $\mathbb{C}P^2$ we take  $S^2\times S^2$, then the analogous equation to \eqref{lambda} takes the form  $1+\lambda=0$ which is solvable over $\coefZ$ by taking $\lambda=-1$. Thus there is a change of basis in $H_*(\Omega\Sigma(S^2\times S^2))$ which induces a Hopf algebra isomorphism with a Lie-Hopf algebra. Topologically, following the arguments of Subsection~\ref{small},  we also know that  $S^2\times S^2$ is an $S^1S$-space, as $\Sigma (S^2\times S^2)\simeq\Sigma^2( S^1\vee S^1\vee S^3)$.

To approach simply-connected $4$-dimensional manifolds $M^4$ more generally, we use their classifications in terms of the intersection form on $H^2(M^4)$. Let us denote the intersection form matrix by $A$. We ask for a condition on $A$ so that $M^4$ is an $S^1S$-space. Following our programme we want to construct a looped homotopy equivalence $\Omega\Sigma M^4\lra \Omega\Sigma^2 Y$ for some $CW$-complex $Y$.
Let us assume that $H_2(M^4)\cong \coefZ^k$ is generated by $u_i$ for $1\leq i\leq k$. For dimensional reasons all $u_i$ are primitive. If the matrix $A$ is non-trivial, then a generator $v\in H_4(M^4)$ is not primitive and its coproduct is given by $\Delta v=v\otimes 1+1\otimes v+u^\top A u$.
 Thus we are looking for  a change of basis
\[
\dr{u_i}{f}\lambda_{i1}w_1+\lambda_{i2}w_2+\ldots +\lambda_{ik}w_k
\]
where all $w_i$ for $1\leq i\leq k$ are primitive.
In matrix form $f(u)=\Lambda w$ where $\Lambda$ is the $k$-matrix $(\lambda_{ij})$ such that $\det \Lambda =\pm 1$.
In degree 4, we look to find a primitive generator $\widetilde{w}\in H_4(M^4)$. For dimensional reasons, $f(v)= \widetilde{w}+\sum \gamma_{kl}w_k|w_l$ for some $\gamma_{kl}\in\coefZ$; or equivalently, in matrix form $f(v)=\widetilde{w}+w^\top\Gamma w$ for the $k$-matrix $\Gamma=(\gamma_{ij})$.

For $M^4$ to be an $S^1S$-space, the following diagram needs to commute
\[
\xymatrix{
v\ar[d]_{\Delta}\ar[r]^f & \widetilde{w}+w^{\top}\Gamma w\ar[d]^{\Delta}\\
v\otimes 1+1\otimes v+u^{\top}Au\ar[r]^-f & \widetilde{w}\otimes 1+1\otimes \widetilde{w} +(w\otimes 1+1\otimes w)^\top\Gamma(w\otimes 1+1\otimes w).
}
\]
From here we deduce the necessary condition
\[
\Lambda^\top A\Lambda=-\Gamma-\Gamma^\top
\]
or equivalently, if we denote $\Lambda^{-1}$ by L, then
\[
A=L^\top(-\Gamma-\Gamma^\top)L.
\]
\begin*{rem}
Since all $u_i$ are  primitive, as a particular case we can choose $\Lambda$ to be the identity matrix.
\end*{rem}
For an easy illustrative example, consider $S^2\times S^2$ and its associated quadratic matrix
\[
A=\begin{pmatrix}0 & 1 \\ 1 & 0\end{pmatrix}.
\]
Taking $\Lambda$ to be $\begin{pmatrix}1 & 0 \\ 0 & 1\end{pmatrix}$, then $\Gamma=\begin{pmatrix}0 & -1\\  0 & 0\end{pmatrix}$ satisfies the condition $A=L^\top(-\Gamma-\Gamma^\top)L$.

\begin{proposition}
\label{mnflds}
Let $M_1$ and $M_2$ be simply-connected $4$-dimensional manifolds. Then $M_1$ and $M_2$ are homotopy equivalent if and only if as Hopf algebras, $H_*(\Omega\Sigma M_1)$ and $H_*(\Omega\Sigma M_2)$ are isomorphic over $\coefZ$.
\end{proposition}
\begin{proof}
Recall that the homotopy type of simply-connected $4$-dimensional manifold $M$ is classified in terms of the intersection form on $H^2(M)$. A Hopf algebra isomorphism of $H_*(\Omega\Sigma M_1)\cong T(\widetilde H_*(M_1))$ and $H_*(\Omega\Sigma M_2)\cong T(\widetilde H_*(M_2))$
induces an isomorphism of the intersection forms on $H^2(M_1)$ and $H^2(M_2)$, which proves the non-trivial part of the proposition. 
\end{proof}

\subsection{The complement of a hyperplane arrangement}

Let $\A$ be a complex hyperplane arrangement  in $\C^l$, that is, a finite set of hyperplanes in $\C^l$. Denote by  $M(\A)$ its complement, that is, $M(\A)= \C^l\setminus\hspace{-.1cm} \text{ supp } \A$. The cohomology of $M(\A)$ is given by the Orlik-Solomon algebra $A(\A)$ (see for example~\cite{OT}). As there are many non-trivial products in this algebra, we can see the Hopf algebra $H_*(\Omega\Sigma M(\A))$ is not primitively generated. On the other hand, it is well known that $\Sigma M(\A)$ breaks into a wedge of spheres (see for example~\cite{S}) and therefore it is an $S^1S$-space. Thus we can conclude that the Hopf algebra $H_*(\Omega\Sigma M(\A))$ is isomorphic to the Lie-Hopf algebra $H_*(\Omega\bigvee S^{n_\alpha})$. In a subsequent paper we will study properties of the cup product in $H^*(M(\A))$, that is, of the Orlik-Solomon algebra $A(\A)$.

\section{Polyhedral products as $S^1S$-spaces}
\label{polyhedral}

A new large family of $S^1S$-spaces appeared as a result of recent work of Bahri, Bendersky, Cohen, and Gitler~\cite{BBCG}. We start by recalling the definition of a polyhedral product functor. Let $K$ be an abstract simplicial complex on $m$ vertices, that is, a finite set of subsets of $[m]=\{ 1,\ldots, m\}$ which is closed under formation of subsets and includes the empty set. Let $(\ul{X},\ul{A})=\{ (X_i, A_i, x_i)\}^m_{i=1}$ denote $m$ choices of connected, pointed pairs of CW-complexes. Define the functor $D\colon K\lra CW_*$ by
\[
D(\sigma)=\prod_{i=1}^m B_i, \text{ where } B_i=\left\{
\begin{array}{ccl}
X_i  & \text{ if }  &  i\in\sigma  \\
A_i  &  \text{ if } & i\in [m]\setminus \sigma
\end{array}
\right .
\]
with $D(\emptyset)=A_1\times\ldots\times A_m$.
Then the polyhedral product $(\ul{X}, \ul{A})^K$ is given by
\[
(\ul{X}, \ul{A})^K=\colim_{\sigma\in K} D(\sigma).
\]
When $(\ul{X},\ul{A})=(D^2, S^1)$, we recover the definition of the moment-angle complex $\mathcal{Z}_K$ introduced by Buchstaber and Panov~\cite{BP}.

In~\cite{BBCG} it was proved that
\[
\Sigma (D^{n+1},S^n)^K\simeq \bigvee_{I\notin K}\Sigma^{2+n|I|}|K_I|
\]
which shows that $(D^{n+1},S^n)^K$, and in particular the moment-angle complexes $\mathcal{Z}_K$, are $S^1S$-spaces. This shows that the collection of $S^1S$-spaces is large and consists of many important spaces which are studied in various mathematical disciplines such as toric topology, complex, symplectic and algebraic geometry, combinatoric and so on.

A natural question that arises is to determine which polyhedral products are $S^1S$-spaces.
Recall that for two topological spaces $X$ and $Y$, the join $X\ast Y$ is homotopy equivalent to
$\Sigma X\wedge Y$. Following~\cite{BBCG}, the space $\widehat{D}(\sigma)$ denotes the smash product $B_{i_1}\wedge\ldots\wedge B_{i_k}$ for $\sigma=\{i_1,\ldots, i_k\}$.  Similarly, $\widehat X^I$ denotes the smash product $X_{i_1}\wedge\ldots\wedge X_{i_k}$ for $I=\{i_1,\ldots, i_k\}$. We now collect three statements on the stable homotopy type of certain polyhedral product functors proven in~\cite{BBCG}.

\begin{thm}[~\cite{BBCG}]
\label{iAcont}
Let $K$ be an abstract simplicial complex with $m$ vertices, and let
\[(\ul{X},\ul{A})=\{ (X_i, A_i, x_i)\}^m_{i=1}
\]
denote $m$ choices of connected, pointed pairs of CW-complexes with the inclusion $A_i\subset X_i$ null-homotopic for all $i$. Then there is a homotopy equivalence
\[
\Sigma (X,A)^K\lra\Sigma\left(\bigvee_I\left(\bigvee_{\sigma\in K_I}|\Delta(\bar{K}_I)_{<\sigma}|\ast \widehat{D}(\sigma)\right)\right).
 \]\qed
 \end{thm}

 \begin{thm}[~\cite{BBCG}]
 \label{Acont}
If all of the $A_i$ are contractible with $X_i$ and $A_i$ closed CW-complexes for all $i$, then there is a homotopy equivalence
 \[
\Sigma ( X, A)^K \lra\Sigma\left(\bigvee_{I\in K}\widehat{X}^I\right).
\]\qed
\end{thm}

\begin{thm}[~\cite{BBCG}]
\label{Xcont}
If all of the $X_i$ in $(\ul{X}, \ul{A})$ are contractible with $X_i$ and $A_i$ closed CW-complexes for all $i$, then there is a homotopy equivalence
\[
\Sigma (X, A)^K\lra\Sigma\left(\bigvee_{I\notin K}| K_I |\ast \widehat{A}^I \right).
\]\qed
\end{thm}

\begin{cor}
The polyhedral products considered in Theorems~\ref{iAcont} and~\ref{Xcont} are $S^1S$-spaces. For the polyhedral products in Theorem~\ref{Acont}  to be $S^1S$-spaces we additionally need each $X_i$ to be a suspension space.\qed
\end{cor}

\subsection{Structural properties of the cup product}
From the point of view of cohomology rings,  a particularly interesting case of polyhedral products is the moment-angle complex $\Z_K=(D^2, S^1)^K$ where $K$ is an arbitrary simplicial complex. The cohomology of the moment-angle complex is known in terms of the Stanley-Reisner algebra $\coefZ [K]$ (see for example~\cite{BP}). Our approach to the theory of Hopf algebras allows us to give a condition on the algebra necessary for it to be realised as the cohomology of a moment-angle complex.  The previous analysis showed that the moment-angle complexes are $S^1S$-spaces, implying that the Hopf algebra $H_*(\Omega\Sigma \Z_K;\coefZ)$ is isomorphic, as a Hopf algebra, to a Lie-Hopf algebra. In other words, in the Hopf algebra $T(\widetilde H_*(\Z_K))$ where the coproduct is induced by the coproduct in $H_*(\Z_K)$ there is a change of basis such that $T( \widetilde H_*(\Z_K))$ becomes primitively generated. Dually, this property can be seen as a new structural property of the cup product. 
Thus we proved the following characterisation property of the cup product for the moment-angle complexes.
 
\begin{proposition}
For an algebra $A$ to be the cohomology algebra of the moment-angle complex $\Z_K$, there must exist a change of basis in $T(A^*)$ such that $T(A^*)$ becomes a Lie-Hopf algebra, where $A^*$ denotes the dual of $A$.\qed
\end{proposition}

This statement readily generalises to the whole family of $S^1S$-spaces.
\begin{proposition}
For an algebra $A$ to be the cohomology algebra of an $S^1S$-space, there must exist a change of basis in $T(A^*)$ such that $T(A^*)$ becomes a Lie-Hopf algebra. \qed
\end{proposition} 

\begin{exa}
Let us consider the 6-dimensional moment-angle manifold $M=(D^2,S^1)^K$ where $K$ is a square, that is,
\[
K=\big\{\{1\}, \{2\}, \{3\},\{4\},\{1,2\}, \{2,3\}, \{3,4\}, \{1,4\}\big\}.
\]
From the work of Buchstaber and Panov~\cite{BP}, we know
\[
H^*((D^2,S^1)^K;\coefZ)\cong H^*\left[ \Lambda [u_1,u_2,u_3,u_4]\otimes\coefZ[v_1,v_2,v_3,v_4]/(v_1v_3, v_2v_4) \ ; \ d\right]
\]
where $|u_1|=1, |v_i|=2$ and the differential $d$ is given by $d(u_i)=v_i$ and $d(v_i)=0$ for $1\leq i\leq 4$. By a straightforward calculation, we find that the 3-dimensional cycles are $a_1=u_1v_3,\ a_2=u_2v_4$, the 6-dimensional cycle is $b=u_1u_2v_3v_4$, and the intersection form on $H^3(M^6)$ is given by the following matrix
\[
A=\begin{pmatrix}0 & 1 \\ -1 & 0\end{pmatrix}.
\]
Thus in integral homology we have $H_*(\Omega\Sigma M ;\coefZ)\cong T(a_1, a_2, b)$, where $|a_1|=|a_2|=3$ and $|b|=6$, and the comultiplication is given by $\Delta(a_i)=1\otimes a_i +a_i\otimes 1$ for $i=1,2$ and  $\Delta(b)=1\otimes b + a_1\otimes a_2 -a_2\otimes a_1 +b\otimes 1$. Therefore this Hopf algebra is not primitively generated. We want to show that this Hopf algebra is however isomorphic to a Lie-Hopf algebra by describing  the change of this basis to a primitive one.
As $a_1$ and $a_2$ are primitive elements, we take $w_i=a_i$ for $i=1,2$. For dimensional reasons, $w_3= b+ \lambda_1 a_1|a_2 +\lambda_2 a_2|a_1+\lambda_3 a_1|a_1+\lambda_4 a_2|a_2$. As we want $w_3$ to be primitive, we need the following relation to hold
\[
\Delta(w_3)=1\otimes w_3+w_3\otimes 1= \Delta (b)+\lambda_1 \Delta(a_1)\Delta(a_2) +\lambda_2\Delta(a_2)\Delta(a_1)+\lambda_3\Delta(a_1)\Delta(a_1)+\lambda_4\Delta(a_2)\Delta(a_2).
\]
By a direct calculation, we conclude that for $w_3$ one can take, for example, $w_3=b+a_2|a_1$. Thus we have confirmed that $H_*(\Omega\Sigma M;\coefZ)$ is isomorphic to a Lie-Hopf algebra.
\end{exa}
\begin{exa}
Now let us look at  the 7-dimensional moment-angle manifold $M=(D^2,S^1)^K$ where $K$ is a pentagon, that is,
\[
K=\big\{\{1\}, \{2\}, \{3\},\{4\},\{5 \},\{1,2\}, \{2,3\}, \{3,4\},\{ 4,5\} \{1,5\}\big\}.
\]
Following~\cite{BP}, the cohomology algebra is given ny
\[
H^*((D^2,S^1)^K;\coefZ)\cong H^*\left[ \Lambda [u_1,u_2,u_3,u_4, u_5]\otimes\coefZ[v_1,v_2,v_3,v_4, v_5]/(v_1v_3, v_1v_4,v_2v_4, v_2v_5,v_3v_5) \ ; \ d\right]
\]
where $|u_1|=1, |v_i|=2$ and the differential $d$ is given by $d(u_i)=v_i$ and $d(v_i)=0$ for $1\leq i\leq 5$. There are five 3-dimensional cycles $a_1=u_1v_3, a_2=u_4v_1, a_3=u_2v_4, a_4=u_5v_2, a_5=u_3v_5$; five 4-dimensional cycles $b_1=u_4u_5v_2, b_2=u_2u_3v_5, b_3=u_5u_1v_3, b_4=u_3u_4v_1, b_5=u_1u_2v_4$ and one 7-dimensional cycle $c=u_1u_2u_3v_4v_5$. All non-trivial cup products are given by $a_ib_i=b_ia_i=c$ for $1\leq i\leq5$.
Thus in integral homology we have $H_*(\Omega\Sigma M ;\coefZ)\cong T(\{a_i, b_i\}_{i=1}^5, c)$, where $|a_i|=3$, $|b_i|=4$ and $|c|=7$, and the comultiplication is given by $\Delta(a_i)=1\otimes a_i +a_i\otimes 1$, $\Delta(b_i)=1\otimes b_i +b_i\otimes 1$  for $1\leq i\leq5$ and  $\Delta(c)=1\otimes c +\sum_{i=1}^5 (a_i\otimes b_i + b_i\otimes a_i) +c\otimes 1$. Thus this Hopf algebra is not primitively generated. We want to show that this Hopf algebra is however isomorphic to a Lie-Hopf algebra by describing  the change of this basis to a primitive one.
As $a_i$ and $b_i$ are primitive elements, we take for $w_{2i-1}=a_i$ and $w_{2i}=b_i$ for $1\leq i\leq 5$. For dimensional reasons, $w_{11}= c+ \sum_{i=1}^5(\lambda_i a_i|b_i +\lambda_{i+5} b_i|a_i)$. As we want $w_{11}$ to be primitive, we need the following relation to hold
\[
\Delta(w_{11})=1\otimes w_{11}+w_{11}\otimes 1= \Delta (c)+\sum_{i=1}^5\left(\lambda_i \Delta(a_i)\Delta(b_i) +\lambda_{i+5}\Delta(b_i)\Delta(a_i)\right).
\]
We conclude that for $w_{11}$ one can take, for example, $w_{11}=c-\sum_{i=1}^5a_i|b_i$. Thus we have confirmed that $H_*(\Omega\Sigma M;\coefZ)$ is isomorphic to a Lie-Hopf algebra.
\end{exa}
\section{Applications to algebra}
\label{applications}
\subsection{Quasi-symmetric polynomials}
In this section we use our new homotopy invariant to study the ring of quasi-symmetric functions. We start by recalling the main definitions, following mainly the notation of Hazewinkel~\cite{Ha}. For more details on applications of topological methods to the study of quasi-symmetric functions see Buchstaber and Erokhovets~\cite{BE}.

\begin{defin}
A \emph{composition} $\omega$ of a number $n$ is an ordered set $\omega = (j_1,\ldots, j_k)$, for $j_i > 1$, such that $n = j_1 + \ldots+ j_k$. Let us denote $|\omega| = n$, $l(\omega) = k$.
The empty composition of $0$ we denote by $()$. Then $|()| = 0,\  l (()) = 0$.
\end{defin}
\begin{defin}
Let $t_1, t_2,\ldots$ be a finite or an infinite set of variables of degree 2. For a composition $\omega= (j_1,\ldots, j_k)$, consider a quasi-symmetric monomial
\[
M_\omega=\sum_{ l_1<\ldots<l_k} t^{j_1}_{l_1}\ldots t^{j_k}_{l_k}, \quad  M_{()} = 1.
\]
whose degree is equal to $2|\omega| = 2(j_1 + \ldots + j_k)$.

For any two monomials $M_{\omega'}$ and $M_{\omega''}$, their product in the ring of polynomials $\coefZ[t_1, t_2,\ldots]$ is equal to
\[
M_{\omega'}M_{\omega''} =\sum_{\omega}\left(\sum_{\Omega'+\Omega''=\omega} 1\right)
M_{\omega}
\]
where for the compositions $\omega= (j_1,\ldots, j_k),\  \omega' = (j_1',\ldots , j_{l'}' ),\  \omega'' = (j_1'' ,\ldots  , j_{l''}'' ), \Omega'$  and $\Omega''$ are all the $k$-tuples such that
\[
\Omega' = (0,\ldots, j_1',\ldots  , 0,\ldots, j_l',\ldots, 0), \ \Omega''= (0,\ldots, j_1'',\ldots, 0,\ldots, j_{l''}''\ldots, 0).
\]
This multiplication rule of compositions is called the \emph{overlapping shuffle multiplication}.
\end{defin}
Thus finite integer combinations of quasi-symmetric monomials form a ring. This ring is called the \emph{ring of quasi-symmetric functions} and is denoted by $\QSymm[t_1,\ldots, t_n]$, where $n$ is the number of variables. In the case of an infinite number of variables it is denoted by $\QSymm[t_1, t_2,\ldots ]$ or $\QSymm$.

The diagonal map $\Delta\colon\QSymm\lra\QSymm\otimes\QSymm$ given by
\[
\Delta M_{(a_1,\ldots, a_k)} =\sum^k_{i=0}M_{(a_1,\ldots, a_i)}\otimes M_{(a_{i+1},\ldots, a_k)}
\]
defines on $\QSymm$ the structure of a graded Hopf algebra.

In~\cite{Ha} Hazewinkel proved the Ditters conjecture that $\QSymm[t_1, t_2,\ldots]$ is a free commutative algebra of polynomials over the integers.

Let $R$ be a commutative associative ring with unit.
\begin{defin}
A \emph{Leibnitz-Hopf algebra} over the ring R is an associative Hopf algebra $\mathcal{H}$ over the ring $R$ with a fixed sequence of a finite or countable number of multiplicative generators $H_i$, $i=1,2,\ldots$ satisfying the comultiplication formula
\[
\Delta(H_n)=\sum_{i+j=n}H_i\otimes H_j, \quad H_0=1.
\]

A \emph{universal Leibnitz-Hopf algebra} $\A$ over the ring $R$ is a Leibnitz-Hopf algebra with the universal property: for any Leibnitz-Hopf algebra $\mathcal{H}$ over the ring $R$ the correspondence $A_i\lra H_i$ defines a Hopf algebra homomorphism.
\end{defin}

Of special interest to us will be the free associative Leibnitz-Hopf algebra over the integers $\mathcal{Z} =\coefZ\langle Z_1,Z_2,\ldots\rangle$ in countably many generators $Z_i$.

\begin{defin}
A \emph{universal commutative Leibnitz-Hopf algebra} $\mathcal{C} =\coefZ[C_1,C_2,\ldots ]$ is a free commutative polynomial Leibnitz-Hopf algebra in generators $C_i$ of degree $2i$. We have $\mathcal{C} =\mathcal{Z}/J_{\mathcal{C}}$, where the
ideal $J_{\mathcal{C}}$ is generated by the relations $Z_iZ_j-Z_jZ_i$.
\end{defin}
The Leibnitz-Hopf algebra $\mathcal{C}$ is a self-dual Hopf algebra and the graded dual Hopf algebra is naturally isomorphic to the algebra
of symmetric functions  $\coefZ[\sigma_1, \sigma_2, \ldots] = \Symm[t_1, t_2,\ldots ]\subset\QSymm[t_1, t_2,\ldots ]$ generated by the symmetric monomials
\[
\sigma_i = M_{\omega_i} = \sum_{l_1<\ldots<l_i}
t_{l_1}\ldots t_{l_i}
\]
where $\sigma_i = (\underbrace{1, . . . , 1}_i)$.

The isomorphism $\mathcal{C}=\mathcal{C}^*$ is given by the correspondence $C_i\lra\sigma_i$.

The Hopf algebra of symmetric functions $\Symm$ has a non-commutative analogue $\NSymm$ obtained by replacing the polynomial algebra in the definition by a free associative algebra $\NSymm=\coefZ\langle\sigma_1,\ldots,\sigma_m,\ldots\rangle$. The diagonal of $\NSymm$ is defined by the same formula as in $\Symm$ and is still cocommutative. The dual of $\NSymm$ is the commutative algebra of quasi-symmetric functions $\QSymm$.

Looking  at the homology of  $\Omega\Sigma\mathbf{C}P^\infty$ with integral coefficients we get a Hopf algebra which is isomorphic to $\NSymm$ (see~\cite{BR}).  This was the starting point for Baker and Richter to positively solve Ditters conjecture using topological methods. By calculating the cohomology algebra $H^*(\Omega\Sigma\C P^\infty)$ they showed that $\QSymm$ is a polynomial algebra.

For the sake of completeness and as an illustration of the close relation between topology and algebra, we explicitly calculate Hopf algebras related to the quasi-symmetric algebra $\QSymm$ by identifying them with the (co)homology of certain topological spaces.

We have the following Hopf algebras:

\textbf{I.}
\begin{enumerate}
\item $H_*(\C P^{\infty})$ is a divided power algebra $\mathbb Z[u_1,u_2,\ldots]/I$, where the ideal $I$ is generated by the relations $u_iu_j-{i+j\choose i}u_{i+j}$, with the comultiplication
\begin{equation}\label{F-1}
\Delta u_n=\sum_{k=0}^{n}u_k\otimes u_{n-k};
\end{equation}
\item $H^*(\C P^{\infty})\cong\mathbb Z[u]$ is the polynomial algebra generated by a single generator $u$ in degree 2.
\end{enumerate}

\textbf{II.}
\begin{enumerate}
\item $H_*(\Omega\Sigma\C P^{\infty})\cong T(\tilde H_*(\C P^{\infty}))=\mathbb Z\langle u_1,u_2,\ldots\rangle$ with $u_i$ being non-commuting variables of degree $2i$. Thus there is an isomorphism of rings
\[
H_*(\Omega\Sigma\C P^{\infty})\cong\mathcal{Z}
\]
under which $u_n$ corresponds to $Z_n$.

The coproduct $\Delta$ on $H_*(\Omega\Sigma\C P^{\infty})$ induced by the diagonal in $\Omega\Sigma\C P^{\infty}$ is compatible with the one in $\mathcal{Z}$:
\[
\Delta u_n=\sum\limits_{i+j=n}u_i\otimes u_j.
\]
Thus there is an isomorphism of graded Hopf algebras.

\item $H^*(\Omega\Sigma\C P^{\infty})$ is the graded dual Hopf algebra to $H_*(\Omega\Sigma\mathbb CP^{\infty})$.
\end{enumerate}

\textbf{III.}

$H_*(BU)\cong H^*(BU)\cong\mathbb Z[\sigma_1,\sigma_2,\dots]\cong\mathcal{C}$. It is a self-dual Hopf algebra of symmetric functions. In cohomology $\sigma_i$ are represented by Chern classes.

\textbf{IV.}
\begin{enumerate}
\item $H_*(\Omega\Sigma S^2)\cong H_*(\Omega S^3)\cong\mathbb Z[w]$ is a polynomial ring with $\deg w=2$ and the comultiplication
\[
\Delta w=1\otimes w+w\otimes 1
\]
\item $H^*(\Omega\Sigma S^2)\cong\mathbb Z[u_1,u_2,\ldots]/I$, where the ideal $I$ is generated by the relations $u_iu_j-{i+j\choose i}u_{i+j}$, is a divided power algebra. Thus $H^*(\Omega\Sigma S^2)\cong H_*(\C P^{\infty})$.
\end{enumerate}

\textbf{V.}

$H_*(\Omega\Sigma(\Omega\Sigma S^2))\cong\mathbb Z\langle w_1,w_2,\dots\rangle$. It is a free associative Hopf algebra with the comultiplication
\begin{equation}\label{F-2}
\Delta w_n=\sum\limits_{k=0}^n{n\choose k}w_k\otimes w_{n-k}.
\end{equation}

\textbf{VI.}

$H_*\left(\Omega\Sigma\left(\bigvee_{i=1}^\infty S^{2i}\right)\right)\cong\mathbb{Z}\langle \xi_1,\xi_2,\ldots\rangle$. It is a free associative algebra and has the structure of a graded Hopf algebra with the comultiplication
\begin{equation}\label{F-3}
\Delta\xi_n = 1\otimes \xi_n + \xi_n\otimes 1.
\end{equation}

\begin{lem}
\begin{itemize}
\item[(i)] Over the rationals, $\NSymm\otimes\Q\cong H_*( \Omega\Sigma\C P^\infty; \Q)$ is isomorphic to a Lie-Hopf algebra. 
\item[(ii)] Over the integers, $\NSymm\cong H_*(\Omega\Sigma\C P^\infty)$ is not isomorphic to a Lie-Hopf algebra.
\end{itemize}
\end{lem}
\begin{proof}
\begin{itemize}
\item[(i)] Let $f\colon \Omega\Sigma S^2\lra K(\coefZ, 2)\simeq\C P^\infty$ be the map which realises a generator of $H^2(\Omega\Sigma S^2)\cong\coefZ$. This map is a rational homotopy equivalence. As $H_*(\Omega\Sigma S^2;\Q)\cong\Q[w]$ with $\deg w=2$ is a Lie-Hopf algebra, the map $f_\Q$ induces a Hopf isomorphism between $\NSymm\otimes\Q\cong H_*( \Omega\Sigma\C P^\infty; \Q)$ and the Lie-Hopf algebra  $H_*(\Omega\Sigma S^2;\Q)$. 
\item[(ii)] Following the first example in Section 3.1, we conclude that $\NSymm\cong H_*(\Omega\Sigma\C P^\infty)$ is not isomorphic to a Lie-Hopf algebra.
\end{itemize}
\end{proof}
We will use topological methods to find a maximal subalgebra of the algebra $H_*(\Omega\Sigma\C P^\infty)$ which over $\Q$ is isomorphic to $H_*(\Omega\Sigma\C P^\infty; \Q)$ but over $\coefZ$ is a Lie-Hopf algebra.
\begin{thm}
The Hopf algebra $H_*(\Omega\Sigma\Omega\Sigma S^2)$ is a maximal subHopf algebra of $H_*(\Omega\Sigma\C P^\infty)$ which is isomorphic to a Lie-Hopf algebra.
\end{thm}
\begin{proof}
Let us start with  the map $f\colon\Omega\Sigma S^2\lra \C P^\infty$ which realises a generator of $H^2(\Omega\Sigma S^2)\cong\coefZ$ and which induces an isomorphism in rational homology but which obviously does not induce an isomorphism over the integers (see examples I (1) and IV (1)). By taking the loop suspension of $f$, we get a loop map $\Omega\Sigma f\colon\Omega\Sigma\Omega\Sigma S^2\lra \Omega\Sigma\C P^\infty$ which in rational homology induces a Hopf algebra isomorphism. Using the James-Hopf invariants, we produce a homotopy equivalence between $\Sigma\Omega\Sigma S^2$ and $\Sigma\bigvee_{i=1}^\infty S^{2i}$ showing that $\Omega\Sigma S^2$ is an $S^1S$-space. Now by  Theorem~\ref{coH}, we have that $H_*(\Omega\Sigma\Omega\Sigma S^2)$ is isomorphic as a Hopf algebra to a Lie-Hopf algebra, which finishes the proof.
\end{proof}

\subsection{Obstructions to desuspending a map}
In this subsection we explicitly write obstructions to desuspending the homotopy equivalence
\begin{equation}
\label{equivalence}
\Sigma\left(\Omega\Sigma S^2\right)\lra \Sigma\left(\bigvee_{i=1}^\infty S^{2i}\right).
\end{equation}
The homotopy equivalence
\[
a \colon \Omega\Sigma\left(\bigvee_{i=1}^\infty S^{2n}\right)
\lra \Omega\Sigma(\Omega\Sigma S^2)
\]
induces an isomorphism of graded Hopf algebras
\[
a_* \colon \mathbb{Z}\langle \xi_1,\xi_2,\ldots\rangle \lra
 \mathbb{Z}\langle w_1,w_2,\ldots\rangle
 \]
and its algebraic form is determined by the conditions
\[
 \Delta a_*\xi_n = (a_* \otimes a_*)(\Delta\xi_n).
\]
For example, $a_*\xi_1 = w_1,\; a_*\xi_2 = w_2- w_1|w_1,\; a_*\xi_3
= w_3- 3w_2|w_1 + 2w_1|w_1|w_1$.

Thus using topological results we have obtained that two Hopf algebra structures on the free associative algebra with comultiplications (\ref{F-2}) and (\ref{F-3}) are isomorphic over
$\mathbb{Z}$.

This result is interesting from the topological point of view, since the elements $(w_n-a_*\xi_n)$ for $n\geq 2$ are obstructions to the desuspension of homotopy equivalence~\eqref{equivalence}.


\begin{thebibliography}{SKCG}




\normalsize
\baselineskip=17pt

\bibitem[BBCG]{BBCG}A. 
Bahri, A., Bendersky, M., Cohen, F. R., Gitler, S.: The polyhedral product functor: a method of decomposition for moment-angle complexes, arrangements and related spaces. 
Adv. Math. \textbf {225}, 1634--1668  (2010)

\bibitem[BR]{BR}  Baker, A., Richter,  B.: Quasisymmetric functions from a topological point of view.
Math. Scand. \textbf{103}, 208--242 (2008)


\bibitem[B]{Borel}  Borel, A.: Sur la cohomologie des espaces fibr\' es principaux.
Ann. of Math. \textbf{57}, 115-207 (1953)

\bibitem[BS]{BS} Bott, R., Samelson,  H.: On the Pontryagin product in spaces of paths.
Comment. Math. Helv.  \textbf{27}, 320--337 (1953)

\bibitem [BE]{BE} Buchstaber, V. M., Erokhovets, N.: Ring of Polytopes, Quasi-symmetric functions
and Fibonacci numbers.
 arXiv:1002.0810v1

\bibitem[BP]{BP}  Buchstaber, V. M., Panov, T. E. : Torus actions, combinatorial topology, and homological algebra.
Russian Mathematical Surveys \textbf{55(5)}, 825--921 (2000)

\bibitem[C]{Ca}  Cannon, J. W.: Shrinking cell-like decompositions of manifolds. {C}odimension
              three.
              Ann. of Math. \textbf{110}, 83--112 (1979)
\bibitem[E]{Ed}Edwards,  R. D.: 
The double suspension of a certain homology 3-sphere is $S^5$.
Notices AMS \textbf{22}, A-334 (1975)

\bibitem[F]{Nov} Fuchs, D. B.: Classical manifolds. Topology. {II}.
    Encyclopaedia Math. Sci. \textbf{24}, Springer, 197--252 (2004)
      
\bibitem[Ha]{Ha} Hazewinkel, M.: 
The algebra of quasi-symmetric functions is free over the integers.
Adv. Math. \textbf{164}, 283--300 (2001)

\bibitem[H]{Hopf} Hopf, H.: 
\"{U}ber die {T}opologie der {G}ruppen-{M}annigfaltigkeiten
              und ihre {V}erallgemeinerungen. 
              Ann. of Math. \textbf{42}, 22--52 (1941)

\bibitem[J]{Ja} James, I. M.: 
Reduced product spaces.
Ann. of Math. \textbf{62},170--197 (1955)

\bibitem[MM]{Milnor-Moore} Minlor, J., Moore, J.:
On the structure of {H}opf algebras.
Ann. of Math. \textbf{81}, 211--264 (1965)

\bibitem[OT]{OT} Orlik, P.,Terao, H.: Arrangements of hyperplanes. Grundlehren der Mathematischen Wissenschaften [Fundamental Principles of Mathematical Sciences] \textbf{300},
Springer-Verlag, (1992)
\bibitem[S]{S} Schaper, Ch.: Suspensions of affine arrangements.
Math. Ann. \textbf{309}, 463--473 (1997)



\end{thebibliography}
\end{document}